\documentclass[a4paper,12pt]{article}
\usepackage[text={6.5in,9.5in},centering]{geometry}
\usepackage{graphicx,url,subfig}
\RequirePackage[OT1]{fontenc}
\RequirePackage{amsthm,amsmath,amssymb,amscd}
\RequirePackage[colorlinks,citecolor=blue,urlcolor=blue]{hyperref}
\usepackage{enumerate}
\usepackage{datetime}
\usepackage[normalem]{ulem} 
\usepackage{soul} 
\makeatother
\numberwithin{equation}{section}
\allowdisplaybreaks
\usepackage{dsfont}
\usepackage{authblk}
\usepackage{mathtools}

\theoremstyle{plain}
\newtheorem{theorem}{Theorem}[section]

\theoremstyle{definition}

\newtheorem{remark}[theorem]{Remark}

\newcommand{\E}{\mathbb{E}}

\newcommand{\Norm}[1]{\left\|  #1 \right\|}

\newcommand{\R}{\mathbb{R}}

\newcommand{\NormO}[2]{\left\|  #1 \right\|_{L^{#2}\left(\Omega\right)}}

\newcommand{\lr}[1]{\left(  #1 \right)}

\newcommand{\inner}[2]{\left\langle #1 , #2 \right\rangle}

\title{A Khintchine type inequality for non-independent random variables}
\author[1]{Jingyu Huang\thanks{j.huang.4@bham.ac.uk}}
\author[1]{Wenxuan Tao\thanks{wxt399@student.bham.ac.uk}}
\affil[1]{School of Mathematics, University of Birmingham,
Birmingham,
B15 2TT,
United Kingdom}

\begin{document}
\date{}
\maketitle
\begin{abstract}
In this short note we give a Khintchine-type inequality involving non-independent random variables. The basic setting is inspired by Breuer-Major theorem \cite{BM} and the proof is based on a hypercontractivity argument. 

\vspace{5mm}
\noindent\textbf{Keywords:} Khintchine inequality; isonormal Gaussian process; hypercontractivity; Hermite rank.

\noindent\textbf{MSC2020 subject classification:} 60G15.
\end{abstract}
\section{Introduction}
This short note concerns the Khintchine-type inequality but for non-independent random variables. The Khintchine inequality, originally studied in \cite{Khintchine}, says that for a sequence of i.i.d. Rademacher random variables $\beta_i$, $i=1,2,\dots, N$ and any $x_i$, $i = 1,2,\dots, N$, $p,q>0$, there is a constant $C_{p,q}>0$ (which does not depend on $x_i$'s and $N$) such that 
\begin{equation}
   \left\|\sum_{i=1}^N x_i \beta_i \right\|_{L^p(\Omega)}\leq C_{p,q}  \left\|\sum_{i=1}^N x_i \beta_i \right\|_{L^q(\Omega)}\,.
\end{equation}
There has been a lot of research in generalizing this inequality and also finding the best constant $C_{p,q}$, see e.g.\cite{Havrilla}. The Khintchine inequality also finds important applications in other areas, e.g, the Littlewood-Paley theory in harmonic analysis, see Section 8.2 in \cite{MS}. 

As far as the authors know, all of these Khintchine type inequalities assume that the random variables are independent. We try to explore this theorem for non-independent random variables. The basic idea is inspired by the celebrated Breuer-Major theorem \cite{BM}, which concerns the central limit theorem for sums of non-independent random variables. To state the Breuer-Major theorem, let us first introduce the following setting, see e.g., \cite[Section 7.2]{nourdin.peccati:12:normal}. 

\begin{enumerate}
    \item $X_1,X_2, \dots,$ is a sequence of centered Gaussian random variables with covariance\\ $\E\lr{X_kX_l}=\rho(k-l)$, here $\rho(0)=1$ and $\rho$ is not identically zero.
    \item Let $f$ be a deterministic function with Hermite rank $d$, i.e.,  $f$ can be written as
    \begin{equation}\label{E:f=Hermite}
        f(x)=\sum_{q=d}^\infty a_qH_q(x)\,
    \end{equation}
    with $a_d\neq 0$, and $H_q(x)$ is the $q$-th Hermite polynomial defined by 
    \begin{equation}\label{E:Hermite_Poly}
        H_q(x) := (-1)^q e^{x^2/2}\frac{d^q}{dx^q} e^{-x^2/2}\,.
    \end{equation}
    \item For each $k \geq 1$, define 
    \begin{equation}\label{E:Y=f(X)}
        Y_k=f(X_k)\,.
    \end{equation}
    Then, $\mathrm{Var}\lr{Y_k}< \infty$ for some $\sigma$ and that $\mathrm{Cov}\lr{Y_k,Y_l}\ne 0$ for some $k\ne l$.
\end{enumerate}

By~\cite[Proposition 7.2.3]{nourdin.peccati:12:normal}, there exist a real separable Hilbert space $\mathfrak{H}$ and an isonormal Gaussian process $X$ over $\mathfrak{H}$, written $\{X(h): h \in \mathfrak{H}\}$ with the property that there exists a set $E = \{\varepsilon_k, k \in \mathbb{Z}\}$ such that (i) $E$ generates $\mathfrak{H}$, (ii) $\langle \varepsilon_l, \varepsilon_k\rangle_{\mathfrak{H}} = \rho(l-k)$ for each $k, l \in \mathbb{Z}$; (iii) $X_k=X(\varepsilon_k)$.  Thus, $\text{Cov}(X_k,X_l)= \inner{\varepsilon_k}{\varepsilon_l}_{\mathfrak{H}}=\rho(k-l)$, $\text{Var}(X_k) = \rho(0)=1$ and $\rho(k)\le 1$ for all $ k , l\geq 1$.
The Breuer-Major theorem states that under the assumptions 
\begin{equation}\label{E:rho_summable}
    \sum_{k\in \mathbb{Z}}|\rho(k)|^d<\infty
   \end{equation}
{and}
\begin{equation}\label{E:sigma_summable}
   \sigma^2 := \sum_{q=d}^{\infty} q! a_q^2 \sum_{k \in \mathbb{Z}}\rho(k)^q < \infty\,,
\end{equation}
the random variables 
\begin{equation}\label{E:B-M_variables}
    \frac{1}{\sqrt{n}}\sum_{k=1}^n f(X_k)  
\end{equation}
    converge in distribution to a centered normal random variable with variance $\sigma^2$ as $n\to \infty$. 


Thus the Breuer-Major theorem says that if either 
$\rho$ has fast decay, or 
$f$ has large Hermite rank, 
then \eqref{E:B-M_variables} will satisfy the central limit theorem. The large Hermite rank of $f$ in $Y_k = f(X_k)$ can make the new variables $Y_k$ have smaller covariances than the original $X_k$'s. 

Motivated by the mechanism for attenuating dependence underlying the Breuer--Major theorem, we consider the same framework, but restrict ourselves to a finite family $\{X_k\}_{k=1}^N$, where $X_k = X(\varepsilon_k)$ for all $k$. The normalized partial sums appearing in~\eqref{E:B-M_variables} may then be viewed as a special case of the more general weighted sums 
\begin{equation}\label{E:WeightedSum}
    \sum_{j=1}^Nx_j f(X_j)\,.
\end{equation}
Such weighted nonlinear Gaussian functionals also arise naturally in the analysis of statistical models, for instance in nonlinear regression. We refer the reader to~\cite{ivanv.leonenko.ea:13:limit} for an example, as well as for a central limit theorem for this class of weighted functionals.

Our aim is to establish a Khintchine-type inequality by showing that, under suitable assumptions on the covariance structure and the Hermite expansion of $f$, the $L^p(\Omega)$ norms of~\eqref{E:WeightedSum} are comparable to their $L^2(\Omega)$ norm uniformly in $N$. The following theorem is our main result.

\begin{theorem}\label{thm: Main}
Fix a positive integer $N$ and let $f$ be a function with Hermite rank $d$. For all integers $q\ge d$, denote by the $N\times N$ matrix
\begin{equation*}
    M_q\coloneqq \{\rho(j-k)^q\}_{j,k=1}^N\,.
\end{equation*}
We assume that the matrix $M_q$ is elliptic and bounded as an operator from $\R^N$ to $\R^N$. That is, there exist $0<\lambda_q\le \Lambda_q<\infty$ such that for all $x\in \R^N$,
\begin{equation}\label{A:assumption1}
    \lambda_q|x|^2\le \langle x, M_q x \rangle\le \Lambda_q|x|^2\,,
\end{equation}
where $\langle\cdot,\cdot\rangle$ and $|\cdot|$ are Euclidean space inner product and norm, respectively. 
For any $p>0$, we also assume that
\begin{equation}\label{A:assumption2}
    \sum_{q=d}^{\infty}(p^{\ast}-1)^{\frac{q}{2}}(q!)^{\frac{1}{2}} |a_q| \Lambda_q^{\frac{1}{2}} < \infty\,,
\end{equation}
where $p^{\ast}=\max\{p,4-p\}$.
Then, 
there exist constants $c_p$ and $C_p$ which are independent of $x$ and $N$ such that for $Y_k = f(X_k)$, $k=1,\dots, N$ and any $x = (x_1, \dots, x_N)\in \R^{N}$,  
\begin{align}\label{E:Khintchine}
    c_p \Norm{\sum_{j=1}^{N}x_j Y_j}_{L^p\lr{\Omega}}\le \Norm{\sum_{j=1}^{N}x_j Y_j}_{L^2\lr{\Omega}}\le C_p \Norm{\sum_{j=1}^{N}x_j Y_j}_{L^p\lr{\Omega}}\,.
\end{align}
\end{theorem}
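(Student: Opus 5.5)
Decompose $S:=\sum_{j=1}^N x_j Y_j$ into Wiener chaoses. Since $Y_j=f(X(\varepsilon_j))=\sum_{q\ge d}a_q H_q(X(\varepsilon_j))=\sum_{q\ge d}a_q I_q(\varepsilon_j^{\otimes q})$, we can write
\begin{equation*}
S=\sum_{q\ge d} F_q,\qquad F_q:=a_q\sum_{j=1}^N x_j H_q(X_j)=I_q\Big(a_q\sum_{j} x_j\varepsilon_j^{\otimes q}\Big).
\end{equation*}
By the isometry $\E[H_q(X_j)H_r(X_k)]=\delta_{qr}\,q!\,\rho(j-k)^q$, the chaoses are orthogonal and
\begin{equation*}
\Norm{F_q}_{L^2(\Omega)}^2=q!\,a_q^2\,\langle x,M_q x\rangle .
\end{equation*}
By \eqref{A:assumption1} this lies between $q!a_q^2\lambda_q|x|^2$ and $q!a_q^2\Lambda_q|x|^2$. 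In particular, $\Norm{S}_{L^2(\Omega)}^2=\sum_q q!a_q^2\langle x,M_qx\rangle$, which is at least $d!\,a_d^2\lambda_d|x|^2>0$ since $a_d\neq0$, and at most $K^2|x|^2$ with $K:=\sum_q (q!)^{1/2}|a_q|\Lambda_q^{1/2}<\infty$ by \eqref{A:assumption2}.

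\emph{Upper bound for $p\ge 2$.} By hypercontractivity of the Ornstein--Uhlenbeck semigroup, a $q$-th chaos element satisfies $\Norm{F_q}_{L^p(\Omega)}\le (p-1)^{q/2}\Norm{F_q}_{L^2(\Omega)}$. Applying Minkowski's inequality to the chaos series (which converges in $L^p$ by the same bound),
\begin{equation*}
\Norm{S}_{L^p(\Omega)}\le \sum_{q\ge d}(p-1)^{q/2}(q!)^{1/2}|a_q|\Lambda_q^{1/2}\,|x| .
\end{equation*}
Here $p^*=p$, so this sum is finite by \eqref{A:assumption2}. Combining with the lower bound $\Norm{S}_{L^2}\ge (d!a_d^2\lambda_d)^{1/2}|x|$ gives $\Norm{S}_{L^p}\le c_p^{-1}\Norm{S}_{L^2}$ with $c_p$ independent of $N$ and $x$. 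The inequality $\Norm{S}_{L^2}\le\Norm{S}_{L^p}$ is trivial by Jensen, so $C_p=1$.

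\emph{Case $p<2$.} Now $\Norm{S}_{L^p}\le\Norm{S}_{L^2}$ is trivial. For the reverse, interpolate via Hölder/log-convexity of $L^r$ norms with $\tfrac12=\tfrac{\theta}{p}+\tfrac{1-\theta}{4}$ when $p\ge1$ (and similarly for $0<p<1$):
\begin{equation*}
\Norm{S}_{L^2}\le \Norm{S}_{L^p}^{\theta}\Norm{S}_{L^4}^{1-\theta}.
\end{equation*}
This reduces the problem to bounding $\Norm{S}_{L^4}\le A\,\Norm{S}_{L^2}$, which by the previous case holds whenever $\sum_q 3^{q/2}(q!)^{1/2}|a_q|\Lambda_q^{1/2}<\infty$. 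Note that \eqref{A:assumption2} with $p^*=4-p>2$ controls $(3-p)^{q/2}$ rather than $3^{q/2}$. So I would instead interpolate between $p$ and $r:=4-p=p^*$: choosing $\theta$ with $\tfrac12=\tfrac{\theta}{p}+\tfrac{1-\theta}{4-p}$ and using $\Norm{S}_{L^{4-p}}\le (\sum_q (3-p)^{q/2}\cdots)|x|\le A'\Norm{S}_{L^2}$ gives $\Norm{S}_{L^2}\le A'^{(1-\theta)/\theta}\Norm{S}_{L^p}$, with all constants uniform in $N$. (The hypothesis with $(p^*-1)^{q/2}=(3-p)^{q/2}$ is exactly what this hypercontractive bound for $L^{4-p}$ requires.)

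The main obstacle is ensuring that every constant is uniform in $N$. This is exactly where ellipticity of $M_d$, which gives the lower bound, and the bounds $\Lambda_q$ enter; the $\lambda_q$ for $q>d$ are actually not needed. A secondary technical point is the $L^p$ convergence of the chaos series, handled by the absolute summability in \eqref{A:assumption2}.
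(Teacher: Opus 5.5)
Your proof is correct and follows essentially the same route as the paper. The paper also uses the chaos decomposition with kernels $a_q\sum_j x_j\varepsilon_j^{\otimes q}$, hypercontractivity plus Minkowski for $p\ge 2$, and, for $p<2$, interpolation between $L^p$ and $L^{4-p}$; your choice $\theta=p/4$ reproduces the paper's Cauchy--Schwarz step $\|Z\|_2^2\le\|Z\|_p^{p/2}\|Z\|_{4-p}^{(4-p)/2}$ exactly. The differences are minor: you bound $\|S\|_{L^2}^2$ from below by the $q=d$ term alone, so only $\lambda_d>0$ is used (the paper keeps $\sum_q q!\,a_q^2\lambda_q$), and you explicitly justify the $L^p$ convergence of the chaos series, which the paper leaves implicit.
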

\begin{remark}
    If we assume further that $x\in \R_+^N=[0,\infty)^N$ and $\rho(k)\ge 0$ for all $k$, the lower bound in~\eqref{A:assumption1} is automatically satisfied since
    \begin{align*}
        \langle x, M_q x\rangle\ge \rho(0)|x|^2=|x|^2\,,
    \end{align*}
    for all $x\in \R_+^N$. Also, if $a_q=0$ for some $q$, then we do not need assumption \eqref{A:assumption1} for that $q$. 
\end{remark}
\begin{remark}
    We note that our theorem does not cover the original Khintchine's inequality which is for Rademacher random variables. We believe that the isonormal Gaussian process is not a good setting for Rademacher random variables. A possible approach based on Rademacher functionals, e.g. \cite{NPR}, is left for future research. 
\end{remark}

\begin{remark}
    One of sufficient condition to Condition~\eqref{A:assumption1} is the diagonal-dominance condition of the matrix $M_q$ through Gershgorin's theorem, see e.g.,~\cite[Theorem 6.1.1]{horn.johnson}. If we assume that 
    $$
    \max_{1\leq i\leq N}\sum_{j \neq i} |\rho(i-j)|^d < 1\,.
    $$
    Then by Gershgorin's theorem, the $\Lambda_q$ and $\lambda_q$ for all $q\geq d$ in \eqref{A:assumption1} can be estimated by 
    \begin{equation*}
        0 < 1-\max_{1\leq i \leq N}\sum_{j\neq i}|\rho(i-j)|^q \leq \lambda_q \leq \Lambda_q \leq 1+ \max_{1\leq i \leq N}\sum_{j\neq i}|\rho(i-j)|^q\,
    \end{equation*}
 since $|\rho|\leq 1$. Examples of diagonal dominant covariance function include fast-decaying functions like Gaussian covariance $\rho(k)=e^{-\alpha k^2}$ with sufficiently large $\alpha$.
\end{remark}

\section{Proof of Theorem \ref{thm: Main}}

We first present some background in Wiener chaos expansion and Malliavin calculus, which are needed in our proof. For a detailed account of these materials, see \cite{Nualart}. 

Let $(\Omega,\mathcal{F},\mathbb{P})$ be a complete probability space, and let
\[
    X=\{X(h):h\in\mathfrak{H}\}
\]
be an isonormal Gaussian process over a real separable Hilbert space
$\mathfrak{H}$; that is, $X$ is a centered Gaussian family satisfying
\[
    \mathbb{E}\bigl[X(h)X(g)\bigr]
    =
    \langle h,g\rangle_{\mathfrak{H}},
    \qquad h,g\in\mathfrak{H}.
\]
We assume that $\mathcal{F}$ is generated by $X$, together with the
$\mathbb{P}$-null sets.

We recall the definition of Malliavin derivative. For all $m\ge 1$, denote by $C_b^\infty(\mathbb{R}^m)$ the set of all bounded $C^\infty(\R^m)$ functions, whose partial derivative of all orders are also bounded. Let
$\mathcal{S}$ denote the class of smooth cylindrical random variables of the
form
\[
    F
    =
    f\bigl(X(h_1),\ldots,X(h_m)\bigr),
\]
where $h_1,\ldots,h_m\in\mathfrak{H}$, and
$f\in C_b^\infty(\mathbb{R}^m)$. The Malliavin derivative of $F$ is the
$\mathfrak{H}$-valued random variable
\[
    DF
    =
    \sum_{j=1}^{m}
    \frac{\partial}{\partial x_j} f\bigl(X(h_1),\ldots,X(h_m)\bigr)h_j.
\]
For all integer $k\geq 1$, iterating this construction for $k$ times, we obtain the $k$th-order
Malliavin derivative of $F$ which is given by
\[
    D^kF
    =
    \sum_{j_1,\ldots,j_k=1}^{m}
    \frac{\partial^k}{\partial x_{j_1}\dots\partial x_{j_k}}
    f\bigl(X(h_1),\ldots,X(h_m)\bigr)
    h_{j_1}\otimes\cdots\otimes h_{j_k}.
\]
Due to the summation, this derivative takes value in the symmetric tensor product space $\mathfrak{H}^{\odot k}\subseteq \mathfrak{H}^{\otimes k}$.

For $p\geq 1$ and $k\in\mathbb{N}$, the Sobolev space
$\mathbb{D}^{k,p}$ is defined as the completion of $\mathcal{S}$ with respect
to the norm
\[
    \|F\|_{k,p}^{p}
    =
    \mathbb{E}|F|^{p}
    +
    \sum_{j=1}^{k}
    \mathbb{E}\bigl[\|D^jF\|_{\mathfrak{H}^{\otimes j}}^{p}\bigr].
\]
The operator $D^k$ can be proven to be closable from
$\mathbb{D}^{k,p}$ into
$L^p(\Omega;\mathcal{H}^{\odot k})$ (see e.g.~\cite[Proposition 2.3.4]{nourdin.peccati:12:normal}).

For each $n\geq 0$, let
$\mathcal{H}_n$ denote the $n$th \textit{Wiener chaos} generated by $\{H_n(X(h)): \|h\|_{\mathfrak{H}}=1\}$ where $H_n$ is the $n$th Hermite polynomial~\eqref{E:Hermite_Poly}. Such polynomials are important since they form an orthogonal basis of
$L^2(\mathbb{R},\mu)$, where $\mu$ denotes the standard Gaussian measure. For every $n\geq 1$, there exists a linear
mapping
\[
    I_n:\mathfrak{H}^{\odot n}\longrightarrow \mathcal{H}_n,
\]
called the \textit{multiple Wiener integral} of order $n$. It maps
symmetric tensors of rank one, e.g.~$h^{\otimes n}\in \mathfrak{H}^{\odot n}$, to
\[
    I_n(h^{\otimes n})
    =
    \|h\|_{\mathfrak{H}}^n
    H_n\left(
        \frac{X(h)}{\|h\|_{\mathfrak{H}}}
    \right),
    \qquad h\in\mathfrak{H}\setminus\{0\},
\]
and extends by linearity and continuity to
$\mathfrak{H}^{\odot n}$. In particular, if
$\|h\|_{\mathfrak{H}}=1$, then
\begin{equation}\label{E:Hermite_Integral}
    I_n(h^{\otimes n})=H_n(X(h)),
\end{equation}
which generates $\mathcal{H}_n$.
Different Wiener chaoses are mutually orthogonal in the sense that
\[
    \mathbb{E}\bigl[I_n(f)I_m(g)\bigr]
    =
    \delta_{m,n}\,
    n!\,
    \langle f,g\rangle_{\mathfrak{H}^{\otimes n}},
\]
where $\delta_{m,n}$ refers to the Kronecker delta function. 

 Due to the above orthogonality, we have the \textit{Wiener-It\^{o} decomposition}: For every $F\in {L^2(\Omega)}$, there exists a sequence  $f_n\in\mathfrak{H}^{\odot n}$, such that
\[
    F
    =
    \sum_{n=0}^{\infty}J_nF
    =
    \mathbb{E}[F]
    +
    \sum_{n=1}^{\infty}I_n(f_n),
\]
where $J_n$ is the orthogonal projection onto $\mathcal{H}_n$ and $I_n$ denotes the multiple Wiener integral of order $n$. The Malliavin derivative interacts naturally with the multiple Wiener integral. In particular,
\begin{equation}\label{E:Diff_Integral}
    \mathbb{E}\bigl[D^kI_m(f)\bigr]
    =
    \begin{cases}
        k!f, & m=k,\\
        0,   & m\neq k.
    \end{cases}
\end{equation}
Thus, the expectation of the $k$th Malliavin derivative isolates precisely
the kernel of the $k$th Wiener chaos component.
Therefore, if $F\in \bigcap_k\mathbb{D}^{k,2}$, the corresponding $f_n$ is given explicitly through its $n$th-order Malliavin derivative, which is known as \textit{Stroock's formula},
\begin{equation}\label{E:Stroock}
    f_n
    =
    \frac{1}{n!}\mathbb{E}[D^nF],
    \qquad n\geq 1.
\end{equation}
This is particularly useful when the Malliavin
derivatives of a random variable admit tractable expressions.


Now we are ready to give the proof for Theorem \ref{thm: Main}. 
\begin{proof}

Recall the deterministic function $f$ is given by~\eqref{E:f=Hermite}. We will treat the case $p\geq 2$ and $0<p<2$ separately. 

    \noindent\textbf{Case 1}: $p\ge 2$. In this case, from H\"older's inequality it is obvious that 
    \begin{align}\label{E:L2leLpLargeP}
        \Norm{\sum_{j=1}^{N}x_j Y_j}_{L^2\lr{\Omega}}\le \Norm{\sum_{j=1}^{N}x_j Y_j}_{L^p\lr{\Omega}}\,.
    \end{align}
    It remains to prove the other inequality. From~\eqref{E:Y=f(X)} and~\eqref{E:f=Hermite}, we have
    \begin{align}\label{E:ChaosExpansion}
        \sum_{j=1}^N x_j Y_j
        =\sum_{j=1}^N\sum_{q=d}^\infty a_q H_q\lr{X_j}x_j
        =\sum_{q=d}^\infty I_q\lr{f_{N,q}}\,,
    \end{align}
    where $I_q$ denotes the multiple Wiener integral of order $q$. By Stroock's formula~\eqref{E:Stroock}, and properties~\eqref{E:Hermite_Integral},~\eqref{E:Diff_Integral}, we have
    \begin{align*}
        f_{N,q}
        =\frac{1}{q\,!}\E \left[D^q\sum_{j=1}^N a_q H_q\lr{X_j}x_j\right]
        =\sum_{j=1}^N a_q x_j \varepsilon_j^{\otimes q}\,.
    \end{align*}
    By the hypercontractivity of Wiener chaos, (see, e.g.~\cite[Corollary 2.8.14]{nourdin.peccati:12:normal})
    \begin{align*}
        \Norm{I_q\lr{f_{N,q}}}_{L^p\lr{\Omega}}\le \lr{p-1}^{\frac{q}{2}}\Norm{I_q\lr{f_{N,q}}}_{L^2\lr{\Omega}}\,.
    \end{align*}
    Hence, from~\eqref{E:ChaosExpansion},
    \begin{align}\label{E:xYHyper}
        \NormO{\sum_{j=1}^Nx_jY_j}{p}
        \le \sum_{q=d}^\infty \NormO{I_q\lr{f_{N,q}}}{p}
        \le \sum_{q=d}^\infty \lr{p-1}^{\frac{q}{2}}\NormO{I_q\lr{f_{N,q}}}{2}\,.
    \end{align}
    For each $q\ge d$, 
    \begin{align*}
        \NormO{I_q\lr{f_{N,q}}}{2}^2
        &=q\,!\Norm{\sum_{j=1}^Na_qx_j \varepsilon_j^{\otimes q}}_{\mathfrak{H}^{\otimes q}}^2 =q\,!a_q^2\sum_{j,k=1}^Nx_jx_k\rho\lr{j-k}^q\,.
    \end{align*}
    Recall by assumption~\eqref{A:assumption1},
    \begin{align*}
        \sum_{j,k=1}^Nx_jx_k\rho\lr{j-k}^q
         \le  \Lambda_q\sum_{j=1}^N|x_j|^2\,.
    \end{align*}
    Therefore,
    \begin{align}\label{E:Lptol2}
        \NormO{\sum_{j=1}^Nx_jY_j}{p}
        \le \sum_{q=d}^\infty \lr{p-1}^{\frac{q}{2}}\lr{q\,!}^{\frac{1}{2}}\left|a_q\right|\Lambda_q^{\frac{1}{2}}\lr{\sum_{j=1}^N|x_j|^2}^{\frac{1}{2}}\,.
    \end{align}
    Again from the uniform ellipticity of $M_q$~\eqref{A:assumption1}, we have
    \begin{align}\label{E:OffDiag}
        \sum_{j,k=1}^Nx_jx_k\rho\lr{j-k}^q
         \geq  \lambda_q\sum_{j=1}^N|x_j|^2\,.
    \end{align}
 Using the orthogonality of the Wiener chaoses,  we obtain the lower bound
    \begin{align}\label{E:L2Lower}
        \NormO{\sum_{j=1}^Nx_jY_j}{2}^2
    =\sum_{q=d}^\infty \NormO{I_q\lr{f_{N,q}}}{2}^2 \geq  \sum_{q=d}^\infty q\,!a_q^2\lambda_q\sum_{j=1}^N |x_j|^2\,.
    \end{align}
    Combining~\eqref{E:Lptol2} and~\eqref{E:L2Lower} gives
    \begin{align*}
        \NormO{\sum_{j=1}^Nx_jY_j}{p}
        \le
        \frac{\sum_{q=d}^\infty\lr{p-1}^{\frac{q}{2}}\lr{q!}^{\frac{1}{2}}\left|a_q\right|\Lambda_q^{\frac{1}{2}}}
        {\lr{\sum_{q=d}^\infty q\,!a_q^2 \lambda_q}^{\frac{1}{2}}}
        \NormO{\sum_{j=1}^Nx_jY_j}{2}\,.
    \end{align*}
    Setting
    \begin{align}\label{E:Cp}
        \tilde{C}_p=
        \frac{\sum_{q=d}^\infty\lr{p-1}^{\frac{q}{2}}\lr{q!}^{\frac{1}{2}}\left|a_q\right|\Lambda_q^{\frac{1}{2}}}
        {\lr{\sum_{q=d}^\infty q\,!a_q^2\lambda_q}^{\frac{1}{2}}}\,,
    \end{align}
    we have
    \begin{align}\label{E:LpleL2LargeP}
        \Norm{\sum_{j=1}^{N}x_j Y_j}_{L^p\lr{\Omega}}\le \tilde{C}_p\Norm{\sum_{j=1}^{N}x_j Y_j}_{L^2\lr{\Omega}}\,.
    \end{align}
    Combining~\eqref{E:L2leLpLargeP} with~\eqref{E:LpleL2LargeP} verifies~\eqref{E:Khintchine} for $p\ge2$, with $c_p = \tilde{C}_p^{-1}$ and $C_p=1$. 

    \noindent\textbf{Case 2}: $0<p<2$. In this case, H\"older's inequality gives
    \begin{align}\label{E:LpleL2SmallP}
        \Norm{\sum_{j=1}^{N}x_j Y_j}_{L^p\lr{\Omega}}\le \Norm{\sum_{j=1}^{N}x_j Y_j}_{L^2\lr{\Omega}}\,.
    \end{align}
    For the opposite direction, let $Z$ be a random variable with finite $L^{4-p}\lr{\Omega}$ norm. Then
    \begin{align*}
        \NormO{Z}{2}^2
        &=\E |Z|^2
        =\E \lr{|Z|^{\frac{p}{2}}|Z|^{\frac{4-p}{2}}}\\
        &\le \lr{\E|Z|^p}^{\frac{1}{2}}\lr{\E|Z|^{4-p}}^{\frac{1}{2}}\\
        &= \NormO{Z}{p}^{\frac{p}{2}}\NormO{Z}{4-p}^{\frac{4-p}{2}}\,.
    \end{align*}
    We apply this estimate to $ Z=\sum_{j=1}^N x_jY_j$, 
 and use the result proved in Case~1 for $\|Z\|_{L^{4-p}(\Omega)}$, 
 it implies that
    \[
        \NormO{Z}{4-p}\le C_{4-p}\NormO{Z}{2}<\infty .
    \]
    Hence
    \begin{align*}
        \NormO{Z}{2}^2
        \le C_{4-p}^{\frac{4-p}{2}}\NormO{Z}{p}^{\frac{p}{2}}\NormO{Z}{2}^{\frac{4-p}{2}}\,.
    \end{align*}
    If $\NormO{Z}{2}=0$, there is nothing to prove. Otherwise, multiplying both sides by $\NormO{Z}{2}^{\frac{p-4}{2}}$ yields
    \begin{align*}
        \NormO{Z}{2}^{\frac{p}{2}}\le C_{4-p}^{\frac{4-p}{2}}\NormO{Z}{p}^{\frac{p}{2}}\,,
    \end{align*}
    and therefore
    \begin{align*}
        \NormO{Z}{2}\le C_{4-p}^{\frac{4-p}{p}} \NormO{Z}{p}\,.
    \end{align*}
    Returning to $Z=\sum_{j=1}^N x_jY_j$ and setting $C_p=\tilde{C}_{4-p}^{(4-p)/p}$ where the constant $\tilde{C}_p$ is given by~\eqref{E:Cp}, we obtain
    \begin{align*}
        \Norm{\sum_{j=1}^{N}x_j Y_j}_{L^2\lr{\Omega}}\le c_p\Norm{\sum_{j=1}^{N}x_j Y_j}_{L^p\lr{\Omega}}\,.
    \end{align*}
    Combining this inequality with~\eqref{E:LpleL2SmallP} completes the proof for $0<p<2$ with $C_p = \tilde{C}_{4-p}^{\frac{4-p}{p}}$ and $c_p=1$. 

    The proof of Theorem \ref{thm: Main} is completed. 
\end{proof}


\section*{Acknowledgements}
The authors would like to thank Raluca Balan for helpful discussions.


\begin{thebibliography}{1}




\bibitem{BM}
P. Breuer and P. Major,
Central limit theorems for nonlinear functionals of Gaussian fields,
\emph{J. Multivariate Anal.} \textbf{13} (1983), no.~3, 425--441.



\bibitem{Havrilla}
A. Havrilla,
A survey of Khintchine type inequalities for random variables,
Master's thesis, Department of Mathematical Sciences, Carnegie Mellon University, 2021.

\bibitem{horn.johnson}
R.~A. Horn and C.~R. Johnson,
\emph{Matrix Analysis},
Cambridge University Press, Cambridge, 1985; MR0832183.

\bibitem{ivanv.leonenko.ea:13:limit}
A.~V. Ivanov, N. Leonenko, M.~D. Ruiz-Medina and I.~N. Savich,
Limit theorems for weighted nonlinear transformations of Gaussian stationary processes with singular spectra,
\emph{Ann. Probab.} \textbf{41} (2013), no.~2, 1088--1114.

\bibitem{Khintchine}
A. Khintchine,
Über dyadische Brüche,
\emph{Math. Z.} \textbf{18} (1923), no.~1, 109--116.

\bibitem{MS}
C. Muscalu and W. Schlag,
\emph{Classical and Multilinear Harmonic Analysis. Vol. I},
Cambridge Stud. Adv. Math., 137,
Cambridge University Press, Cambridge, 2013. xviii+370 pp.


\bibitem{nourdin.peccati:12:normal}
I.~Nourdin and G.~Peccati,
\emph{Normal Approximations with Malliavin Calculus: From Stein's Method to Universality},
Cambridge Tracts in Mathematics, vol.~192,
Cambridge University Press, Cambridge, 2012.
DOI: \href{https://doi.org/10.1017/CBO9781139084659}{10.1017/CBO9781139084659}.

\bibitem{NPR}
I. Nourdin, G. Peccati and G. Reinert,
Stein's method and stochastic analysis of Rademacher functionals,
\emph{Electron. J. Probab.} \textbf{15} (2010), no.~55, 1703--1742.

\bibitem{Nualart}
D. Nualart,
\emph{The Malliavin Calculus and Related Topics},
2nd ed.,
Probability and its Applications,
Springer, Berlin, 2006.

\end{thebibliography}
\end{document}